\documentclass[12pt,amsymb,fullpage]{amsart}
\usepackage{amssymb,amscd,pstricks}

\newtheorem{theorem}{Theorem}[section]
\newtheorem{defn}[theorem]{Definition}

\newtheorem{lemma}[theorem]{Lemma}

\newtheorem{eple}[theorem]{Example}
\newtheorem{rmk}[theorem]{Remarks}
\newtheorem{dsc}[theorem]{Discussion}
\newtheorem{nota}[theorem]{Notation}

\newsavebox{\indbin}
\savebox{\indbin}{\begin{picture}(0,0)
\newlength{\gnu}
\settowidth{\gnu}{$\smile$} \setlength{\unitlength}{.5\gnu}
\put(-1,-.65){$\smile$} \put(-.25,.1){$|$}
\end{picture}}

\newcommand{\be}{\begin{enumerate}}
\newcommand{\bd}{\begin{defn}}
\newcommand{\bt}{\begin{theorem}}
\newcommand{\bl}{\begin{lemma}}
\newcommand{\ee}{\end{enumerate}}
\newcommand{\ed}{\end{defn}}
\newcommand{\et}{\end{theorem}}
\newcommand{\el}{\end{lemma}}

\begin{document}
\title{Decay Rates for Cusp Functions}
\author{Tristram de Piro}
\address{Mathematics Department, The University of Exeter, Exeter}
 \email{tdpd201@exeter.ac.uk}
\thanks{}
\begin{abstract}
We make some observations on the decay rates of the Fourier coefficients of cusp functions.
\end{abstract}
\maketitle

\begin{defn}
\label{coefficient}
We let $H(0,1)$ denote the restrictions of holomorphic functions on the open disc $D(0,1+\epsilon)$, for some $\epsilon>0$, and $C^{\infty}(S^{1})$, the set of smooth, complex-valued functions on the unit circle $S^{1}$. For $f\in H(0;1)$, with power series expansion $f(z)=\sum_{n\in\mathcal{Z}_{\geq 0}}a_{n}z^{n}$, we let $\overline{f}(n)=a_{n}$, for $n\in\mathcal{Z}_{\geq 0}$, and $f_{1}=f|_{{\partial} D(0;1)}$. For $g\in C^{\infty}(S^{1})$, and $n\in\mathcal{Z}$, we let;\\

$\hat{g}(n)={1\over 2\pi}\int_{0}^{2\pi}g(\theta)e^{-i n\theta}d\theta$\\

\noindent denote the $n$'th Fourier coefficient.

\end{defn}

\begin{lemma}
With notation as in \ref{coefficient}, if $f\in H(0,1)$, and $n\in\mathcal{Z}_{\geq 0}$, we have $\overline{f}(n)=\hat{f_{1}}(n)$.
\end{lemma}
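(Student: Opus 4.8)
The plan is to parametrise the unit circle by $z = e^{i\theta}$, $\theta \in [0,2\pi]$, so that $f_1(\theta) = f(e^{i\theta})$, and then to compute the Fourier coefficient $\hat{f_1}(n)$ directly from the power series of $f$. First I would record that, since $f$ is the restriction of a function holomorphic on the strictly larger disc $D(0,1+\epsilon)$, its Taylor series $f(z) = \sum_{m\geq 0} a_m z^m$ has radius of convergence at least $1+\epsilon > 1$; consequently the series converges absolutely and uniformly on the closed unit disc, and in particular uniformly in $\theta$ on $S^1$. This is the one quantitative input I need.

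With this in hand, I would substitute the series into the defining integral,
$$\hat{f_1}(n) = \frac{1}{2\pi}\int_0^{2\pi}\Big(\sum_{m\geq 0} a_m e^{im\theta}\Big)e^{-in\theta}\,d\theta.$$
The uniform convergence of the partial sums on the compact circle justifies interchanging the summation and the integration, giving
$$\hat{f_1}(n) = \sum_{m\geq 0} a_m \cdot \frac{1}{2\pi}\int_0^{2\pi} e^{i(m-n)\theta}\,d\theta.$$
Then I would invoke the orthogonality relation $\frac{1}{2\pi}\int_0^{2\pi} e^{i(m-n)\theta}\,d\theta = \delta_{mn}$, an elementary computation (the integrand is identically $1$ when $m=n$ and a full-period complex exponential, integrating to zero, otherwise). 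Only the term $m=n$ survives, leaving $\hat{f_1}(n) = a_n = \overline{f}(n)$, as required; here the hypothesis $n \geq 0$ is what guarantees that $a_n$ actually appears in the one-sided power series.

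The only genuinely delicate point is the interchange of sum and integral, and I expect this to be the crux of the write-up, since everything else is formal. As an alternative that sidesteps the term-by-term integration, I could instead start from the Cauchy integral formula for Taylor coefficients, $a_n = \frac{1}{2\pi i}\oint_{|z|=1} f(z)\,z^{-(n+1)}\,dz$, and substitute $z = e^{i\theta}$, $dz = i e^{i\theta}\,d\theta$; the factors of $i$ and $e^{i\theta}$ cancel and one arrives at $a_n = \frac{1}{2\pi}\int_0^{2\pi} f(e^{i\theta})e^{-in\theta}\,d\theta = \hat{f_1}(n)$ immediately. Either route works, with holomorphy on $D(0,1+\epsilon)$ ensuring that the contour $|z|=1$ lies strictly inside the region of convergence so that the manipulation is legitimate.
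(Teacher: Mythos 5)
Your proposal is correct, but your primary route differs from the paper's. You expand $f_1$ in its power series and integrate term by term against $e^{-in\theta}$, using the orthogonality relation $\frac{1}{2\pi}\int_0^{2\pi}e^{i(m-n)\theta}\,d\theta=\delta_{mn}$; the interchange of sum and integral is legitimately justified by your observation that holomorphy on $D(0,1+\epsilon)$ forces the Taylor series to converge uniformly on the closed unit disc, which is indeed the one quantitative input needed. The paper instead takes what you offer only as your ``alternative'': it starts from the Cauchy integral formula $\overline{f}(n)=\frac{1}{2\pi i}\int_{\partial D(0,1)}\frac{f(z)}{z^{n+1}}\,dz$, parametrises by $z=e^{i\theta}$, and cancels the factors $ie^{i\theta}$ to land directly on $\frac{1}{2\pi}\int_0^{2\pi}f(e^{i\theta})e^{-in\theta}\,d\theta=\hat{f_1}(n)$. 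The trade-off: the paper's route is a two-line computation but leans on the residue/Cauchy machinery of complex analysis, whereas your term-by-term argument is more elementary (no contour integration at all, once the Taylor expansion is granted) and makes transparent exactly where the hypothesis $n\geq 0$ enters --- namely, that $a_n$ only appears in the one-sided series when $n$ is a nonnegative index, while for $n<0$ the same orthogonality computation would return $0$. Both proofs are complete; since you supplied the paper's argument as well, you have in effect given two valid proofs, with your main one being the more self-contained.
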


\begin{proof}
By the definition of residues for holomorphic function, Cauchy's residue theorem, see \cite{P}, and Fourier series, see \cite{SS}, we have, for $n\in\mathcal{Z}_{\geq 0}$, that;\\

$\overline{f}(n)={1\over 2\pi i}\int_{\partial D(0,1)}{f(z)\over z^{n+1}}dz$\\

$={1\over 2\pi i}\int_{0}^{2\pi}{f(e^{i\theta})\over e^{i(n+1)\theta}}ie^{i\theta}d\theta$\\

$={1\over 2\pi}\int_{0}^{2\pi}{f(e^{i\theta})\over e^{i n\theta}}d\theta=\hat{f_{1}}(n)$\\

\end{proof}

\begin{rmk}
\label{scaling}
By rescaling, we obtain a similar result for $0<\delta<1$ and $f\in H(0,1-\delta)$, $f_{1-\delta}\in C^{\infty}(S^{1})$, where $f_{1-\delta}(z)=f((1-\delta)z)$, for $|z|=1$. Namely;\\

$\overline{f}(n)={1\over (1-\delta)^{n}}\hat{f}_{1-\delta}(n)$.

\end{rmk}

\begin{defn}
\label{cusp}
We let $\mathcal{H}_{+}=\{z\in\mathcal{C}:Im(z)>0\}$ denote the upper half plane and $H(\mathcal{H}_{+})$ denote the set of holomorphic functions on $\mathcal{H}_{+}$. We let $Cusp(\mathcal{H}_{+})\subset H(\mathcal{H}_{+})$ denote holomorphic functions on $\mathcal{H}_{+}$, satisfying the additional symmetry condition;\\

$(i)$. $g(z+1)=g(z)$, for $z\in\mathcal{C}$.\\

and the cusp condition;\\

$(ii)$. $lim_{Im(z)\rightarrow\infty}g(z)=0$ (we assume the limit is uniform in $Re(z)$)\\

\end{defn}

\begin{rmk}
\label{forms}
Observe that cusp forms , see \cite{FK}, are special examples of cusp functions. The map $\Phi=exp(2\pi iz):\mathcal{H}_{+}\rightarrow D(0,1)$ is holomorphic, and, taking a principal branch of the logarithm $\Gamma={log(z)\over 2\pi i}:(D(0,1)\setminus{[0,1)})\rightarrow({\mathcal{H}}_{+}\cap U)$, where $U=\{z\in\mathcal{H}_{+}:0<z<1\}$, we obtain a holomorphic function $\Gamma^{*}(g)$ on $(D(0,1)\setminus{[0,1)})$.
The condition $(i)$ ensures that $\Gamma^{*}(g)$ extends uniquely to a holomorphic function $f_{0}$ on the annulus ${D(0,1)\setminus\{0\}}$, such that $\Phi^{*}(f_{0})=g$. Using a Laurent's Theorem and the condition $(ii)$, $f_{0}$ extends uniquely to a holomorphic function $f$ on $D(0,1)$ with $f(0)=0$. Taking the power series expansion $f(z)=\sum_{n\in\mathcal{Z}_{\geq 0}}a_{n}z^{n}$ on $D(0,1)$, we obtain a convergent expansion $g(z)=\sum_{n\in\mathcal{Z}_{\geq 0}}a_{n}exp(2\pi inz)$ on $\mathcal{H}_{+}$. Similarly to before, for $n\in\mathcal{Z}_{\geq 0}$, we let $\overline{g}(n)=a_{n}$.

\end{rmk}

\begin{lemma}
\label{decay}
If $g\in Cusp(\mathcal{H}_{+})$, then, for any $m\in\mathcal{Z}_{>0}$, there exist constants $C_{m}\in\mathcal{R}_{\geq 0}$ and $D_{m}\in\mathcal{Z}_{>0}$, such that;\\

$|\overline{g}(n)|\leq C_{m}n^{-m}$, for all $n\geq D_{m}$\\
\end{lemma}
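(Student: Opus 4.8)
The plan is to reduce the statement to the classical fact that the Fourier coefficients of a smooth function on $S^1$ decay faster than any power of $n$, and then to transport that decay back to the coefficients $\overline{g}(n)=a_n$ through the correspondence of Remark \ref{forms}. By that remark, $g$ produces a function $f$ holomorphic on $D(0,1)$ with $f(0)=0$ and $\overline{g}(n)=\overline{f}(n)=a_n$, so it suffices to bound the power-series coefficients of $f$. For each $0<\delta<1$ the restriction $f_{1-\delta}(z)=f((1-\delta)z)$ lies in $C^\infty(S^1)$, and Remark \ref{scaling} supplies the exact identity $a_n=(1-\delta)^{-n}\,\hat f_{1-\delta}(n)$; this is the relation I would build on.

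The engine is integration by parts. Writing $h=f_{1-\delta}\in C^\infty(S^1)$ and exploiting periodicity, each integration by parts in $\hat h(n)=\tfrac{1}{2\pi}\int_0^{2\pi}h(\theta)e^{-in\theta}\,d\theta$ removes one power of $n$, so that $\widehat{h^{(m)}}(n)=(in)^m\hat h(n)$ and hence $|\hat h(n)|\le (2\pi)^{-1}\|h^{(m)}\|_{L^1}\,n^{-m}$ for every $m\in\mathcal{Z}_{>0}$ and every $n\ge 1$. Thus for each fixed $\delta$ one obtains $|\hat f_{1-\delta}(n)|\le C_{m,\delta}\,n^{-m}$, which already exhibits the desired polynomial decay for the Fourier coefficients taken at radius $1-\delta$.

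The main obstacle is the exponential prefactor $(1-\delta)^{-n}$: the naive bound reads $|a_n|\le C_{m,\delta}(1-\delta)^{-n}n^{-m}$, and the growth of $(1-\delta)^{-n}$ swamps the gain from integration by parts. Letting $\delta=\delta(n)\to 0$ does not by itself repair this, since the constants $C_{m,\delta}=(2\pi)^{-1}\|f_{1-\delta}^{(m)}\|_{L^1}$ degenerate as $\delta\to 0$. The way I would resolve it is to pass to the limiting radius $\delta=0$: if $f$ is holomorphic on a neighbourhood of the closed disc, i.e. $f\in H(0,1)$ in the sense of Definition \ref{coefficient}, then $f_1=f|_{\partial D(0,1)}\in C^\infty(S^1)$, the Lemma preceding Remark \ref{scaling} gives $a_n=\overline{f}(n)=\hat f_1(n)$ with no prefactor, and the integration-by-parts estimate applied to $h=f_1$ yields $|a_n|=|\hat f_1(n)|\le C_m\,n^{-m}$, which is exactly the assertion with $D_m=1$. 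Hence the step I expect to carry the real weight is justifying that the cusp conditions $(i)$ and $(ii)$, beyond producing a holomorphic $f$ on $D(0,1)$ via Laurent's theorem and the removable-singularity argument of Remark \ref{forms}, force enough regularity of $f$ across $\partial D(0,1)$ for $f_1$ to be smooth; controlling the boundary behaviour of $f$ is the crux on which a complete proof turns.
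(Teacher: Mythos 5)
Your diagnosis of the obstacle is exactly right, but the fix you propose cannot work, and the gap you flag at the end is unfillable: the cusp conditions $(i)$ and $(ii)$ of Definition \ref{cusp} impose no control whatsoever on $g$ as $Im(z)\rightarrow 0$, hence none on $f$ near $\partial D(0,1)$, so $f_{1}$ need not be smooth, or even bounded. Concretely, take $g(z)=\sum_{n\geq 1}n\,e^{2\pi inz}$: the series converges locally uniformly on $\mathcal{H}_{+}$, condition $(i)$ is immediate, and $(ii)$ holds uniformly in $Re(z)$ since $|g(x+iy)|\leq\sum_{n\geq 1}ne^{-2\pi ny}\rightarrow 0$ as $y\rightarrow\infty$. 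So $g\in Cusp(\mathcal{H}_{+})$, the associated $f$ of Remark \ref{forms} is $f(z)=\sum_{n\geq 1}nz^{n}=z/(1-z)^{2}$, which has a double pole at $z=1$, and $\overline{g}(n)=n$, so already the case $m=1$ of the Lemma fails for every choice of $C_{1}$ and $D_{1}$. The Lemma itself is false; membership in $H(0,1)$ in the sense of Definition \ref{coefficient} (holomorphy on a strictly larger disc) is an additional hypothesis on $g$, not a consequence of being a cusp function, which is why the boundary regularity you correctly identify as the crux cannot be established.

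For comparison, the paper's own proof commits precisely the error you isolated and then tried to avoid: from $\overline{f}(n)=(1-\delta)^{-n}\hat{f}_{1-\delta}(n)$ and $|\hat{f}_{1-\delta}(n)|\leq A_{m}n^{-m}$ it concludes $|\overline{g}(n)|\leq C_{m}n^{-m}$ ``where $C_{m}=A_{m}n^{-m}$'' --- the exponentially growing factor $(1-\delta)^{-n}$ is silently discarded, and the alleged constant depends on $n$. Your fixed-$\delta$ analysis is sound as far as it goes: indeed $\hat{f}_{1-\delta}(n)=a_{n}(1-\delta)^{n}$, so the decay at radius $1-\delta$ is genuinely rapid but with $\delta$-dependent constants, and you are also right that no choice $\delta=\delta(n)$ rescues the bound, since $\|f_{1-\delta}^{(m)}\|_{L^{1}}$ blows up polynomially in $1/\delta$ (in the example above, like $\delta^{-(m+1)}$). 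The only true conclusion available along this route is of the form $|a_{n}|\leq(1-\delta)^{-n}\sup_{|z|=1-\delta}|f(z)|$, which is consistent with the polynomial \emph{growth} permitted by the Ramanujan--Petersson bound for genuine cusp forms; the paper's remark that its Lemma ``improves significantly'' on that conjecture should itself have been the warning sign that the argument is broken.
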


\begin{proof}
Let $f$ denote the holomorphic function on $D(0,1)$, obtained in Remark \ref{forms}. For any $0<\delta<1$, we have that $f\in H(0,1-\delta)$. By Remarks \ref{scaling}, we have that $\overline{f}(n)={1\over (1-\delta)^{n}}\hat{f}_{1-\delta}(n)$, $(*)$. As ${f}_{1-\delta}\in C^{\infty}(S^{1})$, by a simple extension of Corollary 2.4 in \cite{SS}, we have that, for any $m\in\mathcal{Z}_{>0}$, there exist constants $C_{m}\in\mathcal{R}_{\geq 0}$ and $B_{m}\in\mathcal{Z}_{>0}$, such that;\\

$|\hat{f}_{1-\delta}(n)|\leq A_{m}n^{-m}$, for all $n\geq B_{m}$\\

Then, by $(*)$ and Remarks \ref{forms}, we have;\\

$|\overline{g}(n)|=|\overline{f}(n)|={1\over (1-\delta)^{n}}|\hat{f}_{1-\delta}(n)|\leq C_{m}n^{-m}$, for all $n\geq B_{m}$\\

where $C_{m}=A_{m}n^{-m}$.

 \end{proof}

 \begin{rmk}
 This result seems to improve significantly on the result of the Ramunajan-Petersson conjecture, that, for cusp forms of weight $k\geq 2$, $|\overline{g}(n)|=O(n^{{k-1\over 2}+\gamma})$, $\gamma>0$, $n\in\mathcal{Z}_{\geq 0}$. The result also easily generalises to holomorphic functions on $\mathcal{H}_{+}$, satisfying just the symmetry condition $(i)$, in Definition \ref{cusp}, for $n\in\mathcal{Z}$, with $|n|\geq B_{m}$.
 \end{rmk}

\end{document}